\documentclass[11pt]{amsart}
\usepackage[a4paper,margin=28mm]{geometry}
\usepackage[T1]{fontenc}
\usepackage{lmodern,microtype,mathtools,amssymb}
\usepackage[hidelinks]{hyperref}
\hypersetup{pdftitle={A Quadratic Ancestor Inequality for Spanning Trees},
 pdfsubject={Weighted spanning trees and loop-erased random walk},
 pdfkeywords={spanning tree, loop-erased random walk, quadratic ancestor inequality}}
\newtheorem{theorem}{Theorem}[section]
\newtheorem{proposition}[theorem]{Proposition}

\theoremstyle{definition}

\theoremstyle{remark}
\newtheorem{remark}[theorem]{Remark}
\newcommand{\PP}{\mathbb P}
\newcommand{\EE}{\mathbb E}
\newcommand{\RR}{\mathbb R}
\newcommand{\ZZ}{\mathbb Z}
\newcommand{\LE}{\operatorname{LE}}
\newcommand{\LCA}{\operatorname{LCA}}
\newcommand{\Comp}{\operatorname{Comp}}

\newcommand{\Es}{\operatorname{Es}}

\newcommand{\one}{\mathbf 1}
\newcommand{\cB}{\mathcal B}
\newcommand{\cF}{\mathfrak F}
\newcommand{\dd}{\,\mathrm d}
\numberwithin{equation}{section}
\title[A Quadratic Ancestor Inequality for Spanning Trees]{A Quadratic Ancestor Inequality for Spanning Trees}
\date{\today}
\author{Gefei Cai}
\address{Beijing International Center for Mathematical Research,
Peking University}
\email{caigefei1917@pku.edu.cn}
\begin{document}

\begin{abstract}
We introduce a quadratic ancestor inequality for weighted spanning trees on finite undirected graphs and give two applications.
First, for every finite $n$-vertex transitive graph, the expected distance in its uniform spanning tree (UST) between two independent uniform vertices is at least $\sqrt n-1$, resolving a conjecture in [Benjamini--Kozma, \emph{Comm. Math. Phys.} \textbf{259} (2005), 257--286].
Second, we prove that the growth exponent of three-dimensional loop-erased random walk (LERW) is at least $3/2$.
\end{abstract}
\maketitle

\section{Introduction}

Let $G=(V,E)$ be a finite connected undirected graph, with positive edge conductances $(c_e)_{e\in E}$ and a distinguished root $r\in V$.
A weighted spanning tree $T$ has distribution
\begin{equation}\label{eq:tree-law}
 \PP(T=t)=Z_G^{-1}w(t),\qquad
 w(t)=\prod_{e\in t}c_e,\qquad Z_G=\sum_{t\text{ spanning tree}}w(t).
\end{equation}
Write $\gamma_x=[x,r]_T$ for its root path, including both endpoints, and $\LCA_T(x,y)$ for the lowest common ancestor
of $x$ and $y$, namely the first common vertex of $\gamma_x$
and $\gamma_y$ when followed towards $r$.
For $B\subseteq V\setminus\{r\}$, define
\begin{equation}\label{eq:h-a}
 H_B(x)=\PP(\gamma_x\cap B\ne\varnothing),\qquad
 a_B(x,y)=\PP(\LCA_T(x,y)\in B).
\end{equation}
Let $U_B=V\setminus(B\cup\{r\})$.
For distinct $x,y\in U_B$, let $J_B(x,y)$ be the probability that $x,y,r$ belong to three different components of the induced forest $T\setminus B$, and put $J_B(x,x)=0$.
We also write \(H_B=(H_B(x))_{x\in U_B}\) and \(J_B=(J_B(x,y))_{x,y\in U_B}\).

We establish the following inequality for weighted spanning trees.
\begin{theorem}[Quadratic ancestor inequality]\label{thm:separation}
For every $G,r,B$ as above, the matrix
$H_BH_B^{\mathsf T}-J_B$ is positive semidefinite. Consequently,
\begin{equation}\label{eq:ancestor}
 \sum_{x,y\in V}f(x)f(y)a_B(x,y)
 \le \left(\sum_{x\in V}f(x)H_B(x)\right)^2
\end{equation}
for every nonnegative $f:V\to\RR$.
For all $x,y\in V$, one also has
\begin{equation}\label{eq:pointwise}
 a_B(x,y)\le 2H_B(x)H_B(y).
\end{equation}
In particular, if $f$ is a probability distribution on $V$ and
$X,Y$ are independent with common law $f$,
independently of $T$, then \eqref{eq:ancestor} becomes
$\PP(\LCA_T(X,Y)\in B)
\le\PP(\gamma_X\cap B\ne\varnothing)^2$.
\end{theorem}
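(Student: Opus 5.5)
The plan is to first reduce \eqref{eq:ancestor} and \eqref{eq:pointwise} to the matrix statement, and then to prove positive semidefiniteness through a decomposition of $H_BH_B^{\mathsf T}-J_B$ over the components of the forest $T\setminus B$.

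\emph{Reduction.} For $x\in U_B$ write $C_x$ for the component of $x$ in $T\setminus B$, and $C_r$ for that of $r$. Then $\gamma_x\cap B\neq\varnothing$ iff $x\notin C_r$. There are four cases.
\begin{itemize}
\item If $x\in C_r$, then $\gamma_x\subseteq C_r$, so $\LCA_T(x,y)\in\gamma_x$ lies outside $B$.
\item If $x,y$ lie in a common component $C\neq C_r$, then all root paths from $C$ leave $C$ through the same edge, so they meet inside $C$ and the $\LCA$ is again not in $B$.
\item Hence for distinct $x,y\in U_B$ we get $a_B(x,y)\le J_B(x,y)$, while $a_B(x,x)=0$ for $x\in U_B$ and $a_B(r,\cdot)=0$.
\item If $x\in B$ then $H_B(x)=1$ and $a_B(x,y)\le H_B(y)=H_B(x)H_B(y)$.
\end{itemize}
Splitting $\sum f(x)f(y)a_B(x,y)$ according to whether $x,y$ lie in $B$ or in $U_B$ then bounds it by
\[
f_U^{\mathsf T}J_Bf_U+2\Bigl(\sum_{B}f\Bigr)\Bigl(\sum_{U_B}fH_B\Bigr)+\Bigl(\sum_B f\Bigr)^2 ,
\]
where $f_U=f|_{U_B}$. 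Positive semidefiniteness gives $f_U^{\mathsf T}J_Bf_U\le(\sum_{U_B}fH_B)^2$, which yields \eqref{eq:ancestor}.

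For \eqref{eq:pointwise} with $x\ne y\in U_B$, test the matrix against $se_x+te_y$ with $s,t>0$:
\[
s^2H_x^2+t^2H_y^2+2st\,(H_xH_y-J_{xy})\ge0 .
\]
Choosing $s=H_y$ and $t=H_x$ gives $J_B(x,y)\le 2H_B(x)H_B(y)$. The remaining cases of \eqref{eq:pointwise} are covered by the observations above.

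\emph{Decomposition.} Let $I_x=\one\{x\notin C_r\}$, so that $H_B=\EE I$. Then
\[
J_B(x,y)=\EE[I_xI_y]-\EE\Bigl[\sum_{C\ne C_r}\one_C(x)\one_C(y)\Bigr]\qquad(x\ne y),
\]
and this identity also holds on the diagonal, since both sides vanish there. Therefore
\[
H_BH_B^{\mathsf T}-J_B \;=\; \EE\Bigl[\sum_{C\neq C_r}\one_C\one_C^{\mathsf T}\Bigr]-\operatorname{Cov}(I),
\]
so the theorem is equivalent to the covariance bound $\operatorname{Cov}(I)\preceq \EE[\sum_{C\ne C_r}\one_C\one_C^{\mathsf T}]$. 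This says that the indicators ``$x$ is cut off from $r$ by $B$'' fluctuate jointly no more than the block structure of the non-root components forces.

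\emph{Main step.} Proving this covariance bound is the main obstacle. I would argue through Wilson's algorithm rooted at the wired set $B\cup\{r\}$, exposing vertices of $U_B$ one at a time, and write $I$ as a sum of martingale increments. When the walk from a new vertex $v$ hits the current tree, it either attaches $v$ to an existing component (or to $r$'s side, or to $B$), or it creates new branches. The increment of the conditional mean $\EE[I\mid\mathcal F_k]$ is then driven by a single random choice: whether the new loop-erased branch ends in $B$ or at $r$. Its conditional variance should be dominated by the outer product of the indicator of the vertices that this branch decides. Those vertices belong to a single component $C$, so the bound would be by $\one_C\one_C^{\mathsf T}$.

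To make this precise I expect to use the spatial Markov property: conditionally on the component partition of $T\setminus B$, the events $\{C\ne C_r\}$ are governed by an electrical network on the contracted graph. Negative correlation (Rayleigh/Kirchhoff transfer current) of these events would then give $\operatorname{Cov}(\EE[I\mid\text{partition}])\preceq0$ off the block diagonal. Combined with the trivial within-component variance bound
\[
\operatorname{Var}(\one\{C\ne C_r\})\le\PP(C\ne C_r),
\]
this closes the argument. If the negative correlation turns out to be too weak for a full matrix inequality, I would instead try to realize $\operatorname{Cov}(I)$ as a Gram matrix of currents, which are projections in $\ell^2(E)$. The matrix $\EE[\sum_{C\ne C_r}\one_C\one_C^{\mathsf T}]$ would be the Gram matrix of a larger family, and the inequality would follow from the contraction property of projections.
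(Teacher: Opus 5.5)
Your reduction is correct and is the same as the paper's. This covers the case analysis giving $a_B\le J_B$ on distinct pairs in $U_B$ and $a_B(x,y)\le H_B(y)$ for $x\in B$, the split into contributions bounded by $b^2$, $2ab$, $a^2$, and the $2\times2$ test for \eqref{eq:pointwise}. Your covariance reformulation $H_BH_B^{\mathsf T}-J_B=\EE[\sum_{C\ne C_r}\one_C\one_C^{\mathsf T}]-\operatorname{Cov}(I)$ is also right.

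\textbf{The gap.} The whole content of the theorem is the positive semidefiniteness, and your main step does not prove it. The mechanism you describe cannot work. Conditioning on the component partition of $T\setminus B$ already determines $C_r$, hence $I$, so it gains nothing. More fundamentally, negative correlation across components together with $\operatorname{Var}(\one\{C\ne C_r\})\le\PP(C\ne C_r)$ is entrywise information, and entrywise information does not yield a matrix inequality. Already when all non-root components are singletons, the target is $\operatorname{Cov}(I)\preceq\mathrm{diag}(H_B)$. Equivalently, $\one\one^{\mathsf T}-K\succeq0$, where $K_{xy}=J_B(x,y)/(H_B(x)H_B(y))$ off the diagonal and $K_{xx}=0$. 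Your two facts only give $0\le K_{xy}\le1$, and that does not suffice. Take three indicators with $I_1=1$, $I_2=I_3=0$ with probability $1/2$, and otherwise $I_1=0$ with $(I_2,I_3)$ having marginals $1/2$ and $\PP(I_2=I_3=1)=1/8$. All covariances are nonpositive, yet $K_{12}=K_{13}=0$, $K_{23}=1$, and
\[
 \det\begin{pmatrix}1&1&1\\1&1&0\\1&0&1\end{pmatrix}=-1 .
\]

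The Wilson sketch has separate problems. Running Wilson's algorithm with $B\cup\{r\}$ wired samples the tree of the contracted graph $G/(B\cup\{r\})$, which does not have the law of the components of $T\setminus B$. On the unit triangle $r,b,x$ with $B=\{b\}$ it gives $1/2$ instead of $H_B(x)=1/3$. Also, the Doob increments of $\EE[I\mid\mathcal F_k]$ move the conditional means of unexplored vertices too, so they are not supported on a single component. The Gram-matrix fallback is not developed, and $I_x$ is a nonlocal function of $T$ rather than an edge indicator.

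\textbf{What is missing.} You need a negative-dependence input that controls the full Hessian of a generating function, not just its entries. The paper sets $Q_B(z)=\EE\prod_{C\in\mathcal C_B(T)\setminus\{C_r(T)\}}\bigl(1+\sum_{u\in C}z_u\bigr)$. Then $Q_B(0)=1$, $\nabla Q_B(0)=H_B$ and $\nabla^2Q_B(0)=J_B$, where the diagonal vanishes because $\partial_x^2Q_B\equiv0$. Hence $-\nabla^2\log Q_B(0)=H_BH_B^{\mathsf T}-J_B$, and your covariance bound is exactly $\nabla^2\log Q_B(0)\preceq0$ in disguise.

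The paper then proves that $Q_B$ is completely log-concave:
\begin{enumerate}
\item adjoin a vertex $*$ joined to every $u\in V\setminus B$;
\item take the independent-set polynomial of the graphic matroid of $G[V\setminus B]$ plus these star edges, direct-summed with the free matroid on the remaining edges;
\item multiply by the basis polynomial of the dual graphic matroid of $G$;
\item extract the coefficient of $\prod_e t_e$, differentiate in $z_r$, and set $s=1$.
\end{enumerate}
Each operation preserves complete log-concavity by Anari--Liu--Oveis Gharan--Vinzant and Br\"and\'en--Huh. An input of this strength is what your main step would have to supply.
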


We prove Theorem~\ref{thm:separation} using completely log-concave matroid polynomials from Anari--Liu--Oveis Gharan--Vinzant~\cite{ALOV} and Br\"and\'en--Huh~\cite{BH}. We also refer to e.g.~\cite{COSW,BBL,Lyons} for related work on generating polynomials of spanning trees
and their negative dependence properties.
The constant $1$ in the inequality \eqref{eq:ancestor} is optimal.
Indeed, take the deterministic tree with edges $\{r,b\}$ and $\{b,x_i\}$, $1\le i\le k$, put $B=\{b\}$, and let $f$ be uniform on the $k$ leaves. The two sides of \eqref{eq:ancestor} are then $1-1/k$ and $1$, respectively.

Theorem~\ref{thm:separation} has the following two main applications. We call a graph \emph{transitive} if its conductance-preserving automorphisms act transitively on its vertices.

\begin{theorem}\label{thm:bk}
Let $G$ be a finite connected transitive weighted graph with $n$ vertices.
Let $T$ have law \eqref{eq:tree-law}, and let $X,Y$ be independent uniform vertices, independent of $T$.
Write $\gamma=[X,Y]_T$.
For every deterministic $B\subseteq V$,
\begin{equation}\label{eq:set-hit}
 \PP(\gamma\cap B\ne\varnothing)\ge\sqrt{\frac{|B|}{n}}.
\end{equation}
For a deterministic partition $\mathcal P=\{B_1,\ldots,B_k\}$ of $V$ into nonempty sets,
let $N_{\mathcal P}$ count the blocks of $\mathcal P$ visited by $\gamma$. Then
\begin{equation}\label{eq:partition-hit}
 \EE N_{\mathcal P}\ge\sum_{i=1}^k\sqrt{\frac{|B_i|}{n}}.
\end{equation}
In particular, let $d_T(X,Y)$ be the number of edges
in the unique path from $X$ to $Y$ in $T$. Then
\begin{equation}\label{eq:bk}
 \EE d_T(X,Y)\ge\sqrt n-1.
\end{equation}
\end{theorem}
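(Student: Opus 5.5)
The plan is to deduce Theorem~\ref{thm:bk} from Theorem~\ref{thm:separation} by a \emph{median trick}. Let $X,Y,R$ be independent uniform vertices, independent of $T$. Root $T$ at $R$. Then $M=\LCA_T(X,Y)$ is the median of $X,Y,R$ in $T$, and it lies on $\gamma=[X,Y]_T$. It also lies on $[X,R]_T$ and on $[Y,R]_T$.

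The first step computes the law of $M$ exactly. The law of $T$ is invariant under conductance-preserving automorphisms, and the triple $(X,Y,R)$ is invariant in law as well. So the law of $M$ is invariant under a transitive group, and hence $M$ is uniform: $\PP(M\in B)=|B|/n$ for every $B$.

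The second step applies \eqref{eq:ancestor} root by root. Fix $R=r$ and take $f$ uniform. If $r\notin B$, Theorem~\ref{thm:separation} gives
\[
\PP(M\in B\mid R=r)\le h_r^2,\qquad h_r:=\PP([X,r]_T\cap B\neq\varnothing).
\]
If $r\in B$, then $h_r=1$, so the same bound holds trivially. Averaging over $r$ yields
\[
\frac{|B|}{n}\le \frac1n\sum_r h_r^2 .
\]

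The third step, which I expect to be the main obstacle, is passing from this mean of squares to the desired bound on
\[
\PP(\gamma\cap B\ne\varnothing)=\frac1n\sum_r h_r .
\]
Jensen's inequality points the wrong way here. I would first try to avoid averaging over roots altogether. The idea is to exploit the symmetry of the median in its three arguments: condition instead on $X=x$, and note that by transitivity the conditional quantities can be transported to a single base point.

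Concretely, I would replace $f$ by a signed or reweighted test function adapted to the orbit structure, which the full positive semidefiniteness of $H_BH_B^{\mathsf T}-J_B$ permits. The aim is to bound $\PP(M\in B)$ by the square of a quantity that equals $\PP(\gamma\cap B\ne\varnothing)$ exactly, rather than by an average of squares. A second option is a Cauchy--Schwarz argument using $h_r\le 1$ together with the split into $r\in B$ and $r\notin B$, trying to show the variance of $h_r$ is controlled. Whichever route works, the target of this step is \eqref{eq:set-hit}.

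The remaining claims are then formal. Summing \eqref{eq:set-hit} over the blocks gives
\[
\EE N_{\mathcal P}=\sum_i\PP(\gamma\cap B_i\neq\varnothing)\ge\sum_{i=1}^k\sqrt{\frac{|B_i|}{n}},
\]
which is \eqref{eq:partition-hit}. For the partition into singletons, $N_{\mathcal P}$ is the number of vertices of $\gamma$, which equals $d_T(X,Y)+1$. Hence $\EE d_T(X,Y)\ge n\cdot n^{-1/2}-1=\sqrt n-1$, which is \eqref{eq:bk}.
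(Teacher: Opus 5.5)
There is a genuine gap in your third step, and it is the entire content of \eqref{eq:set-hit}. Your first step and the final deduction of \eqref{eq:partition-hit} and \eqref{eq:bk} are correct and agree with the paper.

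The trouble is that the inequality $|B|/n\le\frac1n\sum_r h_r^2$ from your second step is empty. Since $h_r=1$ for every $r\in B$, the terms with $r\in B$ alone already give $\frac1n\sum_r h_r^2\ge|B|/n$. Together with $0\le h_r\le1$, it yields nothing beyond the trivial bound $\PP(\gamma\cap B\ne\varnothing)\ge\PP(X\in B)=|B|/n$. For singletons this gives only $\EE|\gamma|\ge1$, so no variance-control argument can start from it.

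Your suggested alternatives do not repair this:
\begin{itemize}
\item Conditioning on $X$ instead of $R$ reproduces the same per-root bounds, now with the root at $X$.
\item Signed test functions do not help, because the comparison $a_B\le J_B$ can only be inserted where $f(x)f(y)\ge0$.
\end{itemize}
More fundamentally, you are working with $n$ different matrices $H_BH_B^{\mathsf T}-J_B$, one for each fixed root. Per-root applications of Theorem~\ref{thm:separation} therefore inevitably put the square inside the average over roots.

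The missing idea is to put the random root inside a single weighted spanning-tree measure. Then \eqref{eq:ancestor} is applied once, with $H_B$ already averaged over roots. The paper does this as follows.
\begin{enumerate}
\item Adjoin a cemetery vertex $\partial$ joined to every $v\in V$ by an edge of conductance $m$, and root at $\partial$. Then $B\subseteq V$ never contains the root, so no case split is needed.
\item Apply \eqref{eq:ancestor} with $f=\one_V/n$ to obtain $\PP_m(\LCA(U,W)\in B)\le\PP_m(\gamma_U^m\cap B\ne\varnothing)^2$.
\item A spanning tree of the augmented graph with $j$ cemetery edges has weight proportional to $m^j$. Hence, as $m\downarrow0$, the augmented tree converges in law to $T$ together with a single edge $\{R,\partial\}$, where $R$ is uniform and independent of $T$.
\item The left side tends to $\PP(\LCA_R(U,W)\in B)=|B|/n$, by your first step. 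The right side tends to $\PP([U,R]_T\cap B\ne\varnothing)^2=\PP(\gamma\cap B\ne\varnothing)^2$.
\end{enumerate}
The result is the square of the root-average of $h_R$, not the average of $h_R^2$, which is exactly what your third step needed.
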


Benjamini and Kozma~\cite[Introduction]{BK} asked whether the expected
UST distance between two independent uniform vertices of every finite transitive graph admits a universal lower bound of order $\sqrt n$. Theorem~\ref{thm:bk} then answers this question.
Previously, lower bounds of order $\sqrt n$ were 
obtained under high-dimensional or
model-specific assumptions~\cite{PR,Sch4}. See also~\cite{CHL,MNS,ANSdense,ANSghp,AS} for related results on UST
distances, diameters, and scaling limits.

Our second application is to the growth exponent
of three-dimensional LERW. Let $S$ be a simple random walk on $\ZZ^3$, started at $0$.
For $B_R:=\{x\in\ZZ^3:|x|<R\}$, set
\begin{equation}\label{eq:beta-def}
 \tau_R=\inf\{j\ge0:S_j\notin B_R\},\qquad
 M_R=|\LE(S[0,\tau_R])|,\qquad
 \beta=\lim_{R\to\infty}\frac{\log\EE M_R}{\log R}.
\end{equation}
The existence of the limit was proved by Shiraishi~\cite{ShiG}.
Let $\eta=\LE(S[0,\infty))$.

\begin{theorem}\label{thm:3d}
There is $c>0$ such that, for every $R\ge1$,
\begin{equation}\label{eq:occupation-main}
 \EE|\eta\cap B_R|\ge cR^{3/2}.
\end{equation}
Consequently, we have $\beta\ge3/2$.
\end{theorem}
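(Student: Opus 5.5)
The plan is to apply Theorem~\ref{thm:separation} with one singleton block $B=\{b\}$ at a time, summed over all $b$, and to feed in two inputs: translation invariance, and a constant lower bound on the probability that two branches merge locally. I work with the wired uniform spanning forest of $\ZZ^3$, which is a single tree. By Wilson's algorithm rooted at infinity, the branch from $x$ to infinity has the law of $x+\eta$. To apply Theorem~\ref{thm:separation}, which concerns finite graphs, I take $G_K$ to be $B_{KR}$ with all exterior vertices wired into a root $r$ and unit conductances. I apply the theorem on $G_K$ and let $K\to\infty$. All the quantities used below are probabilities of events that are local in $B_{2R}$ for the branches. They converge to their wired-USF values because the random walk on $\ZZ^3$ is transient. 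So I argue directly in the limit.

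Let $X,Y$ be independent and uniform on $B_R$, and let $\gamma_x$ be the branch from $x$. For each $b\in B_{2R}$, inequality \eqref{eq:ancestor} with $f$ uniform on $B_R$ and $B=\{b\}$ gives $\PP(\LCA(X,Y)=b)\le p_b^2$, where $p_b:=\PP(b\in\gamma_X)$. Summing over $b\in B_{2R}$ yields
\begin{equation*}
\PP\bigl(\LCA(X,Y)\in B_{2R}\bigr)\le\sum_{b\in B_{2R}}p_b^2\le\Bigl(\max_b p_b\Bigr)\sum_{b\in B_{2R}}p_b .
\end{equation*}
Now set $M:=\EE|\eta\cap B_{4R}|$. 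By translation invariance,
\begin{equation*}
p_b=|B_R|^{-1}\sum_{x\in B_R}\PP(b-x\in\eta)\le |B_R|^{-1}\,\EE|\eta\cap B(b,R)|\le CM/R^3,
\end{equation*}
and $\sum_{b\in B_{2R}}p_b=\EE|\gamma_X\cap B_{2R}|\le \EE|(X+\eta)\cap B_{2R}|\le M$. Hence $\PP(\LCA(X,Y)\in B_{2R})\le CM^2/R^3$. If this probability is at least a constant $c_0>0$ uniformly in $R$, then $M\ge cR^{3/2}$, and replacing $4R$ by $R$ gives \eqref{eq:occupation-main}.

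The main obstacle is the merging estimate $\PP(\LCA(X,Y)\in B_{2R})\ge c_0$. By Wilson's algorithm, I first generate $\gamma_X$. Then I run a simple random walk from $Y$ until it hits $\gamma_X$; the hitting point is exactly $\LCA(X,Y)$. So it suffices to show the following: with probability bounded below, a random walk started from $Y\in B_R$ hits $\gamma_X\cap B_{2R}$ before leaving $B_{2R}$. I would take this from known hittability estimates for three-dimensional LERW, as in Lawler's and Sapozhnikov--Shiraishi's work. Those estimates say that, with high probability, LERW is hit by an independent walk started at comparable distance with probability bounded below. I would add an elementary restriction to the event where $|X-Y|$ is at least a small constant multiple of $R$ and $Y$ lies at distance of order $R$ from $\partial B_{2R}$. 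This restriction costs only a constant factor.

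Finally, to get $\beta\ge3/2$, I compare $\EE M_R$ with $\EE|\eta\cap B_R|$. The standard fact is that the loop-erasure of the walk stopped at $\tau_R$ and the infinite LERW $\eta$ agree in law, up to constants, on $B_{R/2}$; this comparison is due to Lawler and Masson. It gives $\EE M_R\ge c\,\EE|\eta\cap B_{R/2}|\ge cR^{3/2}$, and letting $R\to\infty$ in \eqref{eq:beta-def} yields $\beta\ge3/2$.
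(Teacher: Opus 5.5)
Your skeleton is the paper's. You apply \eqref{eq:ancestor} with $B=\{b\}$ and $f$ uniform on $B_R$, sum over $b$ in a ball of radius comparable to $R$, and use translation invariance to bound the right side by $CM^2/R^3$. Your bound $\max_b p_b\cdot\sum_b p_b$ is equivalent to the paper's $|B_{KR}|\,A_m((K+1)R)^2$.

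The genuine difference is how you obtain the merging estimate $\PP(\LCA(X,Y)\in B_{CR})\ge c_0$, which is where essentially all the work lies.

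\textbf{The paper's route.} It never works at $m=0$ until the very end.
\begin{enumerate}
\item It proves a two-point connection bound in the massive forest $\cF_\mu$ with $\mu=R^{-2}$. The inputs are a second-moment bound on the intersection local time of two killed walks ($g_\mu\ge cR^{-1}$ on $B_R$, $\cB(\mu)\le CR$) and the Lyons--Peres--Schramm inequality $\PP(\LE(X)\cap Y\ne\varnothing)\ge 2^{-8}\PP(X\cap Y\ne\varnothing)$.
\item It localizes this bound to $B_{KR}$ with Doob's inequality.
\item It transports the increasing event $x\longleftrightarrow_{B_{KR}}y$ to every $m\le\mu$, and hence to $m\downarrow0$, via the determinantal domination $\cF_\mu\subseteq\cF_m$.
\end{enumerate}
The killing also makes the infinite-volume limit trivial, since all walks have finite lifetimes.

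\textbf{Your route.} You work directly in the wired forest at $m=0$ and import the merging estimate from 3D LERW hittability results. This is legitimate, but it moves the whole difficulty into a deep, $\ZZ^3$-specific input. It also does not come off the shelf in the form you need. The standard hittability statements (e.g.\ Sapozhnikov--Shiraishi) concern starting points within $\varepsilon R$ of a LERW in a ball. You would still have to:
\begin{enumerate}
\item bring the walk from $Y$ into an $\varepsilon R$-tube around $\gamma_X\cap B_{3R/2}$, with probability $c(\varepsilon)>0$, using capacity of the tube;
\item transfer hittability from the ball LERW to the infinite LERW $\eta$.
\end{enumerate}
In exchange you avoid massive forests and determinantal domination. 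The paper's argument, by contrast, needs only heat-kernel bounds. The same second-moment and LPS estimate is reused in Proposition~\ref{thm:return} and hence in the long-range case.

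One small inaccuracy: at $m=0$ the events $\{\LCA(X,Y)=b\}$ are not local in $B_{2R}$, because the walk from $Y$ may travel far before hitting $\gamma_X$. The limit $K\to\infty$ still holds pathwise by transience.

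\textbf{Your final step needs repair.} The Lawler--Masson comparison concerns the initial segment of the path up to its first exit from a smaller ball. It does not cover the whole trace $\eta\cap B_{R/2}$, which $\eta$ may re-enter after that exit. So it does not by itself give $\EE M_R\ge c\,\EE|\eta\cap B_{R/2}|$. You can fix this in either of two ways:
\begin{itemize}
\item use a pointwise one-point comparison $\PP(z\in\LE(S[0,\tau_R]))\ge c\,\PP(z\in\eta)$ for $|z|\le R/2$, taken from Shiraishi-type one-point estimates;
\item or, as the paper does, prove the upper bound $\EE|\eta\cap B_R|\le CR^\beta$ from $\PP(z\in\eta)\le C(1+|z|)^{-1}\Es(|z|)$ and $\Es(r)\asymp r^{\beta-2}$.
\end{itemize}
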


Note that $\beta$ also equals the Hausdorff dimension of the scaling limit of the three-dimensional LERW~\cite{ShiH}.
The classical bound $\beta>1$ follows by comparison with random-walk
intersection exponents; see Lawler~\cite[Introduction]{Lawler1999} and the
references therein.
Recently, \cite{Liu} made this approach quantitative and improved it to $\beta>1.0001$. Theorem~\ref{thm:3d} also implies that two independent infinite LERWs on $\ZZ^3$ intersect infinitely often
a.s.; see Theorem~\ref{thm:independent-lerw}.

\smallskip
\paragraph{\textbf{Organization of the paper}}
We prove Theorem~\ref{thm:separation} in Section~\ref{sec:deletion}.
Sections~\ref{sec:finite-apps} and~\ref{sec:3d} then prove Theorems~\ref{thm:bk} and~\ref{thm:3d}, respectively.
Section~\ref{sec:further} provides further applications of Theorem~\ref{thm:separation}, including that the three-dimensional long-range LERW in the non-mean-field regime has lower growth exponent at least $3/2$ as well (Proposition~\ref{prop:long-range}).

\smallskip
\paragraph{\textbf{Statement on the use of AI}}
The original motivation for this work was to prove Theorem~\ref{thm:3d}.
The author reduced this problem to the sufficient condition~\eqref{eq:ancestor},
a general inequality for finite graphs. With the assistance of 
AI, the author subsequently developed the proof of Theorem~\ref{thm:separation}
and found that the theorem also yields Theorem~\ref{thm:bk} as a corollary. The author takes full responsibility for the paper.

\section{The quadratic ancestor inequality}\label{sec:deletion}

A polynomial with nonnegative coefficients is \emph{completely
log-concave} if every iterated directional derivative (including order zero) in
nonnegative directions is either zero or has concave logarithm
on the positive orthant.
We use the following input from the complete log-concavity of matroid polynomials~\cite{ALOV,BH}.

\begin{theorem}[Complete log-concavity of matroid polynomials]\label{thm:matroid-input}
Let $\mathsf M$ be a matroid on a finite ground set $F$, with
independent sets $\mathcal I(\mathsf M)$.
Then
\begin{equation}\label{eq:matroid-input}
 \sum_{I\in\mathcal I(\mathsf M)}
 s^{|F|-|I|}\prod_{e\in I}z_e
 \quad\text{and}\quad
 \sum_{\substack{A\subseteq F\\ A\text{ a basis of }\mathsf M}}
 \prod_{e\in A}z_e
\end{equation}
are homogeneous completely log-concave polynomials.
Moreover, the class of homogeneous completely log-concave
polynomials is closed under products, partial derivatives, and
nonnegative linear substitutions.
\end{theorem}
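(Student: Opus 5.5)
This result is imported from the literature. The plan is to derive it from the theory of Lorentzian polynomials of Br\"and\'en--Huh~\cite{BH}, rather than to reprove it from scratch. The first step is to recall their equivalence: a homogeneous polynomial with nonnegative coefficients is completely log-concave (in the sense above) if and only if it is Lorentzian. Lorentzian of degree $d$ means two things.
\begin{enumerate}
\item Its support is $M$-convex.
\item Every derivative $\partial^\alpha$ with $|\alpha|=d-2$ is a quadratic form with at most one positive eigenvalue.
\end{enumerate}
With this dictionary, each claim becomes a statement about Lorentzian polynomials.

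\emph{Basis generating polynomial.} I would first check $M$-convexity of the support. The support of $\sum_{A}\prod_{e\in A}z_e$ is the set of indicator vectors of bases, and the basis exchange axiom is exactly $M$-convexity for $0/1$ vectors.

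Next I would reduce the eigenvalue condition to rank two. Differentiating in $z_e$ gives the basis polynomial of the contraction $\mathsf M/e$, or zero if $e$ is a loop. Hence every $(d-2)$-fold derivative is either zero or the basis polynomial of a rank-$2$ minor. For a rank-$2$ matroid, delete the loops and group the remaining elements into parallel classes $P_1,\dots,P_m$, with $y_i=\sum_{e\in P_i}z_e$. The polynomial is $\sum_{i<j}y_iy_j=\tfrac12\bigl((\sum_i y_i)^2-\sum_i y_i^2\bigr)$. This is a rank-one positive form minus a positive semidefinite form, so its Hessian in the $y$-variables has at most one positive eigenvalue. Pulling back along the linear map $z\mapsto y$ preserves this property by Sylvester's law of inertia. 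This step is the main check, but it is short.

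\emph{Homogenized independence polynomial and closure properties.} For the polynomial $\sum_I s^{|F|-|I|}\prod_{e\in I} z_e$, I would follow \cite[Thm.~4.10]{BH}. Its support is the set of vectors $(\mathbf 1_I,|F|-|I|)$, and the augmentation axiom gives $M$-convexity of this set. Differentiating in $z$ contracts the matroid, while differentiating in $s$ lowers the power of $s$. So the degree-$2$ derivatives are, up to positive constants, of the form $s^2\cdot c+s\sum_e z_e+\sum_{\{e,f\}\text{ indep.}}z_ez_f$ for loopless rank-$\le2$ minors. I expect the main obstacle here to be tracking these constants. What I would try first is to verify directly that such a form has Lorentzian signature, by the same parallel-class computation, now with an extra rank-one contribution from $s$. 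Finally, the closure of homogeneous Lorentzian (equivalently completely log-concave) polynomials under products, partial derivatives, and nonnegative linear substitutions $z\mapsto Az$ with $A\ge0$ is \cite[Thm.~2.10, Cor.~2.11, Thm.~2.12]{BH}; compare \cite{ALOV}. I would simply cite these.
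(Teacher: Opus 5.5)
Your proposal is correct in outline, but it takes a different route from the paper. The paper's proof is pure citation. It uses \cite{ALOV} (Theorem~4.1) for the homogenized independence polynomial, \cite{BH} (Proposition~4.25 with Theorem~2.30) for both polynomials, and \cite{BH} (Theorem~2.10 with Corollary~2.32) for the closure properties. You cite only the Lorentzian $\Leftrightarrow$ completely log-concave equivalence and the closure properties, and you check the Lorentzian conditions for the two matroid polynomials by hand. Your version makes the matroid input transparent: it reduces to basis exchange or augmentation plus a rank-two signature count, and the only deep ingredient left is the Br\"and\'en--Huh local-to-global equivalence. The paper's version is shorter and needs no computation. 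Your basis-polynomial argument is complete. For the pull-back step, the precise fact is that the positive index of $L^{\mathsf T}HL$ never exceeds that of $H$, which holds even for non-invertible $L$.

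For $P(s,z)=\sum_I s^{|F|-|I|}z^I$ you left the signature check open, and your description of it needs two corrections.

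\emph{Which matroids appear.} Take $\partial^\alpha=\partial_s^{\,n'-2}\prod_{e\in S}\partial_{z_e}$ with $S$ independent and $n'=|F\setminus S|$. This derivative involves the contraction $\mathsf M/S$, which can have any rank; for example, $S=\varnothing$ gives $\mathsf M$ itself. Only its independent sets of size at most two enter. So ``loopless rank-$\le2$ minors'' should read ``contractions, truncated at size two.''

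\emph{The $s$-terms.} They are not a rank-one positive addition, since $\binom{n'}{2}s^2+(n'-1)sY$ is indefinite. You have to complete the square:
\[
 \frac{\partial^\alpha P}{(n'-2)!}
 =\binom{n'}{2}s^2+(n'-1)sY+\sum_{i<j}y_iy_j
 =\binom{n'}{2}\Bigl(s+\frac{Y}{n'}\Bigr)^2+\frac12\Bigl(\frac{Y^2}{n'}-\sum_{i}y_i^2\Bigr).
\]
Here $y_1,\dots,y_m$ are the parallel-class sums of the non-loops of $\mathsf M/S$, and $Y=\sum_i y_i$. The last term is negative semidefinite because $Y^2\le m\sum_i y_i^2$ and $m\le n'$.

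So the constants do work out, but only through the inequality $m\le n'$. This is exactly where homogenizing to degree $|F|$, rather than to the rank, is used. For instance, $s^2+s\sum_i z_i+\sum_{i<j}z_iz_j$ for $U_{2,4}$ has two positive eigenvalues. With this computation inserted, your argument is complete.
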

\begin{proof}
The complete log-concavity of the first polynomial is proved in
\cite[Theorem~4.1]{ALOV}; the same property for both polynomials
follows from \cite[Proposition~4.25 and Theorem~2.30]{BH}. The closure properties follow from \cite[Theorem~2.10 and Corollary~2.32]{BH}.
\end{proof}

For clarity, a matroid is a nonempty hereditary family of subsets of a finite ground set satisfying the augmentation axiom; its maximal independent sets are its bases.
We use only graphic matroids, whose independent sets are forests, free matroids, in which every subset is independent, and dual matroids, whose bases are complements of the original ones.

For a realization of the weighted spanning tree $T$, write $\mathcal C_B(T)=\Comp(T\setminus B)$ and let $C_r(T)$ be the component containing $r$.
For variables $z=(z_u)_{u\in U_B}$, define
\begin{equation}\label{eq:Q-def}
 Q_B(z)=\EE \prod_{C\in\mathcal C_B(T)\setminus\{C_r(T)\}}
                  \left(1+\sum_{u\in C}z_u\right).
\end{equation}
An empty product is $1$.

\begin{proposition}\label{prop:Q}
The polynomial $Q_B$ is completely log-concave.
\end{proposition}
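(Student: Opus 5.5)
The plan is to exhibit $Q_B$, after homogenization, as the image of the basis generating polynomial of a graphic matroid under the operations allowed by Theorem~\ref{thm:matroid-input}: products, partial derivatives and nonnegative linear substitutions. Since $Q_B$ is not homogeneous, I will work with
\[
\widetilde Q_B(s,z)=s^{N}Q_B(z/s),
\]
where $N=|U_B|$ bounds the number of non-root components. Complete log-concavity of $\widetilde Q_B$ implies that of $Q_B$, since setting $s=1$ is a nonnegative substitution of a homogeneous variable and preserves log-concavity on the positive orthant. Multiplying out the product in \eqref{eq:Q-def}, $Z_G\,\widetilde Q_B$ becomes a sum over pairs $(t,R)$. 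Here $t$ is a spanning tree, $R\subseteq U_B$ picks at most one vertex in each component of $T\setminus B$ other than $C_r(t)$, and the weight is $w(t)\,s^{N-|R|}\prod_{u\in R}z_u$.

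The key step is to encode such pairs as bases (or independent sets) of a graphic matroid on an auxiliary graph. I would take $G^+$, obtained from $G$ by adding a new vertex $\rho$ joined to each $u\in U_B$ by an edge of variable $z_u$. Then I would split every edge of $G$ incident to $B$ into its own variable $y_e$ and every other edge into $x_e$, and start from the basis generating polynomial $P(x,y,z)$ of the graphic matroid of $G^+$, which is completely log-concave by Theorem~\ref{thm:matroid-input}.

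The combinatorial point to check is this. A spanning tree $t$ of $G$ together with a choice $R$ of one marked vertex per selected component of $t\setminus B$ should correspond to a forest of $G^+$: the edges of $t$ inside $U_B\cup\{r\}$, together with the edges $\{\rho,u\}$ for $u\in R$ and the $B$-incident edges of $t$. The free factor $s^{N-|R|}$ is then supplied by the independent-set polynomial of Theorem~\ref{thm:matroid-input}, with $s$ recording the corank. To force the $B$-incident edges to form, with the rest, a spanning tree of $G$ rather than an arbitrary forest, I plan to multiply by the spanning tree polynomial of $G$ with $B$-contracted data. Equivalently, I would take the product of two matroid polynomials sharing the variables $y_e$, and then extract the multilinear part in the $y$-variables via partial derivatives $\prod_e\partial_{y_e}$ after a substitution identifying copies. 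Finally I would substitute $x_e=c_e$ and $y_e=c_e$, which are nonnegative linear substitutions, and divide by $Z_G$.

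The step I expect to be the main obstacle is this encoding: making sure that each pair $(t,R)$ is counted exactly once, with the factor $1$ for unmarked components and the correct power of $s$, and with no spurious forests. If the direct encoding fails, I would fall back on conditioning. First condition on the edge set $t_B$ of $T$ incident to $B$, which determines the partition of $U_B\cup\{r\}$ into the pieces that the remaining forest must connect. I would then write the conditional sum as a derivative of a product of (free matroid)$\times$(graphic matroid) polynomials and sum over $t_B$. That sum must itself be realized as a derivative in the $y$-variables of a completely log-concave polynomial, so that closure under sums of this specific form is not needed separately.
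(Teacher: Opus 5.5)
Your overall strategy is the right one: realize a homogenization of $Q_B$ as a coefficient extraction from a product of matroid polynomials, then set $s=1$. But the encoding you propose does not work, and the ideas that make it work are missing.

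\textbf{The star edges see too much.} In the graphic matroid of $G^+$ you keep every edge of $G$, including the $B$-incident ones. A spanning tree $t$ of $G$ is already connected on all of $V$, so adding two star edges $\{\rho,u\}$ and $\{\rho,u'\}$ closes a cycle through $\rho$. Hence $t\cup\{\{\rho,u\}:u\in R\}$ is independent only when $|R|\le1$, and the correspondence with the terms of $Z_G\widetilde Q_B$ breaks down. The acyclicity constraint must see only the forest $T\setminus B$. The paper uses the graphic matroid of $G[U]+M$ with $U=V\setminus B$, in direct sum with the \emph{free} matroid on the edges incident to $B$. Then an independent set $J$ with $J\cap E=T$ may contain at most one star edge per component of $T\setminus B$, and is otherwise unconstrained.

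\textbf{The root component is not excluded.} Your $\rho$ is joined only to $U_B$, so nothing forbids marking a vertex of $C_r(t)\setminus\{r\}$, and you would pick up the spurious factor $1+\sum_{u\in C_r(t)\setminus\{r\}}z_u$. The paper also attaches a star edge to $r$, with its own variable $z_r$, and applies $\partial_{z_r}$ at the end. Equivalently, it contracts that edge, identifying the new vertex with $r$. This removes exactly the root-component factor.

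\textbf{The coupling step is not correct as described.} Whether $J\cap E$ is a spanning tree of $G$ depends on the edges inside $U$ too. So a second factor sharing only the $y$-variables cannot enforce it; all edge variables must be shared. Moreover, extracting the squarefree monomial $\prod_e t_e$ from a product of two polynomials that are multilinear in $t$ pairs \emph{complementary} supports. With the ordinary spanning-tree polynomial you would therefore match $J\cap E$ with complements of trees. The missing idea is to use the basis generating polynomial of the dual graphic matroid, $B^*(t)=\sum_T\prod_{e\in E\setminus T}t_e$, which is homogeneous completely log-concave by Theorem~\ref{thm:matroid-input}. Let $I(s,t,z)$ be the corank-homogenized independent-set polynomial of the matroid above, with edge weights $c_et_e$ and star weights $z_u$. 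Then the coefficient $[\prod_{e\in E}t_e](I\,B^*)=(\prod_{e\in E}\partial_{t_e})(IB^*)|_{t=0}$ is homogeneous completely log-concave by the closure properties. It equals $\sum_Tw(T)s^{D-|\mathcal C_B(T)|}\prod_{C\in\mathcal C_B(T)}(s+\sum_{u\in C}z_u)$ with $D=|E|+|U|-|V|+1$. Applying $\partial_{z_r}$, setting $s=1$, and dividing by $Z_G$ gives $Q_B$.

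Your conditioning fallback does not close the gap. As you observe, complete log-concavity is not preserved by the sum over $t_B$, and realizing that sum inside one completely log-concave polynomial is exactly what the dual-matroid product accomplishes.
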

\begin{proof}
Put $U=V\setminus B$, $k=|V|-1$, and $E_U=E(G[U])$.
Add a new vertex $*$ and the star edges $M=\{s_u=\{*,u\}:u\in U\}$.
On $E\sqcup M$, take the direct sum $\mathsf N$ of the graphic matroid of $G[U]+M$ and the free matroid on $E\setminus E_U$.
Set $N=|E|+|U|$.

Introduce variables $t_e$ for $e\in E$, variables $z_u$ for $u\in U$, and a homogenizing variable $s$.
The polynomial
\begin{equation}\label{eq:I}
 I(s,t,z)=\sum_{J\in\mathcal I(\mathsf N)}s^{N-|J|}
       \prod_{e\in J\cap E}(c_et_e)\prod_{s_u\in J\cap M}z_u
\end{equation}
is homogeneous completely log-concave by Theorem~\ref{thm:matroid-input} after the change of variables \(t_e\mapsto c_et_e\).
So is the basis generating polynomial of the dual graphic matroid
\begin{equation}\label{eq:B-star}
 B^*(t)=\sum_{T\text{ spanning tree of }G}\prod_{e\in E\setminus T}t_e.
\end{equation}

Extracting the coefficient of \(\prod_{e\in E}t_e\) gives
\begin{equation}\label{eq:coefficient}
 F(s,z)=\left[\prod_{e\in E}t_e\right]\bigl(I(s,t,z)B^*(t)\bigr)
 =\left.\left(\prod_{e\in E}\partial_{t_e}\right)(IB^*)\right|_{t=0}.
\end{equation}
The closure properties above show that $F$ is also homogeneous completely log-concave, of degree $D=N-k$.
For each spanning tree $T$, a subset $S\subseteq M$
can be added to $T\cap E_U$ without creating a cycle
precisely when $S$ contains at most one edge $s_u$
with $u$ in each component of $T\setminus B$.
Consequently,
\begin{equation}\label{eq:F}
 F(s,z)=\sum_T w(T)s^{D-|\mathcal C_B(T)|}
                       \prod_{C\in\mathcal C_B(T)}\left(s+\sum_{u\in C}z_u\right).
\end{equation}
All exponents in~\eqref{eq:F} are nonnegative, since $D\ge |U|\ge|\mathcal C_B(T)|$.

Differentiate $F(s,z)$ with respect to \(z_r\) and then set \(s=1\).
Since only the factor belonging to $C_r(T)$ is differentiated, we have
\begin{equation}\label{eq:Q-extract}
 Q_B(z)=\left.Z_G^{-1}\partial_{z_r}F(s,z)\right|_{s=1}.
\end{equation}
Note that partial differentiation preserves homogeneous complete log-concavity, and specialization at \(s=1\) preserves complete log-concavity in the remaining variables. We conclude.
\end{proof}

\begin{proof}[Proof of Theorem~\ref{thm:separation}]
By \eqref{eq:Q-def}, for every $x,y\in U_B$,
\begin{equation}\label{eq:derivatives}
 Q_B(0)=1,\qquad \partial_xQ_B(0)=H_B(x),\qquad
 \partial_x\partial_yQ_B(0)=J_B(x,y).
\end{equation}
Furthermore, $\partial_x^2 Q_B(z)=0$ as $z_x$ occurs in at most one factor in each product in \eqref{eq:Q-def}.
Hence, by Proposition \ref{prop:Q}, we obtain that 
$-\nabla^2\log Q_B(0)=H_BH_B^{\mathsf T}-J_B$ is positive semidefinite.

For $x,y\in U_B$, the event $\{\LCA_T(x,y)\in B\}$ implies that $x,y,r$ lie in three different components of $T\setminus B$.
Thus $a_B(x,y)\le J_B(x,y)$, and \eqref{eq:ancestor} follows for $f$ supported on $U_B$.
Taking two-by-two principal minors of $H_BH_B^{\mathsf T}-J_B$ gives
\begin{equation}\label{eq:J-pair}
 \bigl(H_B(x)H_B(y)-J_B(x,y)\bigr)^2
 \le H_B(x)^2H_B(y)^2,
\end{equation}
and hence $J_B(x,y)\le2H_B(x)H_B(y)$.

It remains to remove the assumption that \(f\) is supported on \(U_B\).
Terms involving $r$ vanish, while $H_B(x)=1$ for $x\in B$ and $a_B(x,y)\le H_B(y)$ whenever $x\in B$.
Put $a=\sum_{x\in B}f(x)$ and $b=\sum_{x\in U_B}f(x)H_B(x)$.
Decomposing the left side of \eqref{eq:ancestor} according to whether neither, exactly one, or both of \(x,y\) lie in \(B\), the three contributions are bounded by $b^2$, $2ab$, and $a^2$, respectively.
This proves \eqref{eq:ancestor} for arbitrary nonnegative $f$, and also \eqref{eq:pointwise}.
\end{proof}

\section{Proof of Theorem~\ref{thm:bk}}\label{sec:finite-apps}

We prove Theorem~\ref{thm:bk} by applying Theorem~\ref{thm:separation}.
For a finite graph $G$ and $m>0$, adjoin a cemetery vertex $\partial$ and an edge of conductance $m$ from each $v\in V$ to $\partial$.
Deleting $\partial$ from a weighted spanning tree of the augmented
graph yields a rooted forest $\cF_m$, called the
\emph{massive spanning forest} with constant killing rate $m$.
Each component is rooted at its unique vertex previously adjacent
to $\partial$.
Let $C_m(x)$ be its component containing $x$, and $\gamma_x^m$ be the path from $x$ to $\partial$, with $\partial$ omitted.
Wilson's algorithm identifies this path with the loop-erasure of the continuous-time random walk with jump rates $c_{xy}=\sum_{e=\{x,y\}}c_e$, stopped at an independent exponential time of rate $m$ (see e.g.~\cite[Chapter~4]{LP}). We write \(\mathbb P_m\) for the law of $\cF_m$.

\begin{proof}[Proof of Theorem~\ref{thm:bk}]
Fix $B\subseteq V$, and let $U,W$ be two independent uniform vertices of $V$, independent of $\cF_m$.
Applying \eqref{eq:ancestor} with root $\partial$ and $f=\one_V/n$ yields
\begin{equation}\label{eq:finite-hit-m}
 \PP_m(\LCA(U,W)\in B)
 \le \PP_m(\gamma_U^m\cap B\ne\varnothing)^2.
\end{equation}
A spanning tree of the augmented graph containing \(j\) cemetery edges has weight proportional to \(m^j\).
As $m\downarrow0$, its law therefore converges to a weighted spanning tree $T$ of $G$ with an independent uniform vertex $R$ joined to $\partial$.
The joint law of $(T,U,W,R)$ is invariant under the transitive automorphism group, so $\LCA_R(U,W)$ is uniform on $V$.
Taking \(m\downarrow0\) in \eqref{eq:finite-hit-m} gives
\[
 \frac{|B|}{n}
 \le \PP([U,R]_T\cap B\ne\varnothing)^2,
\]
which proves \eqref{eq:set-hit}.
For a partition $\mathcal P=\{B_1,\ldots,B_k\}$ of $V$, we then have $\EE N_{\mathcal P}
=\sum_{i=1}^k\PP(\gamma\cap B_i\ne\varnothing)
\ge\sum_{i=1}^k\sqrt{|B_i|/n}$; this implies \eqref{eq:partition-hit}.
Taking $\mathcal P$ to be the partition into singletons
gives $\EE|\gamma|\ge\sqrt n$.
Since $|\gamma|=d_T(X,Y)+1$, we obtain \eqref{eq:bk}.
\end{proof}

\section{Proof of Theorem~\ref{thm:3d}}\label{sec:3d}

We prove Theorem~\ref{thm:3d} using local connectivity estimates of massive spanning forests.
The construction of $\cF_m$ extends to every countable locally finite undirected graph with jump rates satisfying $\sum_yc_{xy}=1$.
Indeed, for finitely many starting points, the underlying random walks in Wilson's algorithm have finite lifetimes a.s.
Exhaustion by finite wired sets and retaining the killing rate $m$ therefore gives convergence of their entire branches and common ancestors.
Consequently, Theorem~\ref{thm:separation} passes to the infinite-volume limit for finite \(B\) and finitely supported \(f\).
We set $\LCA(x,y)=\partial$ when $x,y$ belong to distinct components.

\begin{proof}[Proof of Theorem~\ref{thm:3d}]
We work with continuous-time simple random walk with unit jump rate.
We first obtain a local connectivity estimate at killing rate $R^{-2}$, and then extend it to all \(0<m\le R^{-2}\) by monotonicity in the killing rate.

\emph{Local connectivity estimate.}
We first prove a second-moment estimate on any transitive weighted undirected graph with
$\sum_y c_{xy}=1$ for every $x\in V$, which will also be used in Section~\ref{sec:killed}.
Let $p_t(x,y)$ be its transition kernel, fix an origin $o$, and put
\begin{equation}\label{eq:massive-green}
 g_m(x,y)=\int_0^\infty e^{-mt}p_t(x,y)\dd t,
 \qquad \cB(m)=\sum_zg_m(o,z)^2.
\end{equation}
Note that
\begin{equation}\label{eq:bubble-identity}
 \sum_zg_m(x,z)=m^{-1},\qquad
 \cB(m)=\int_0^\infty t e^{-mt}p_t(o,o)\dd t.
\end{equation}
For independent walks $X,Y$ started at $x,y$, with independent exponential lifetimes $\zeta_X,\zeta_Y$ of rate $m$, define
\begin{equation}\label{eq:I-occupation}
 I_{x,y}=\int_0^{\zeta_X}\int_0^{\zeta_Y}
                 \one_{\{X_s=Y_t\}}\dd t\dd s.
\end{equation}
Writing $g=g_m$, we have
\begin{align}
 \EE I_{x,y}
 &=\sum_u g(x,u)g(y,u),\label{eq:I-first}\\
 \EE I_{x,y}^2
 &=2\sum_{u,v}g(x,u)g(y,u)g(u,v)^2
   +2\sum_{u,v}g(x,u)g(y,v)g(u,v)^2\le4\cB(m)^2.\label{eq:I-second}
\end{align}
The last inequality follows from Cauchy--Schwarz inequality and that $ab\le(a^2+b^2)/2$.
Wilson's algorithm and the inequality of~\cite[Lemma~1.2 and Remark~1.3]{LPS} imply
\begin{equation}\label{eq:LPS}
 \PP_m(y\in C_m(x))
 =\PP\bigl(\LE(X[0,\zeta_X))\cap Y[0,\zeta_Y)\ne\varnothing\bigr)
 \ge2^{-8}\PP(I_{x,y}>0).
\end{equation}

Now on $\ZZ^3$, set $\mu=R^{-2}$, and take $x,y\in B_R$.
The heat-kernel bounds give
\begin{equation}\label{eq:3d-green}
 \inf_{x,u\in B_R}g_\mu(x,u)\ge cR^{-1},
 \qquad \cB(\mu)\le CR.
\end{equation}
Here, the first estimate follows by integrating over $t\in[R^2,2R^2]$, and the second follows from
$p_t(0,0)\le C(1+t)^{-3/2}$ and \eqref{eq:bubble-identity}; see~\cite[Theorem~2.1.3]{LL}.
Combining~\eqref{eq:I-first} and \eqref{eq:I-second} gives
$\EE I_{x,y}\ge cR$ and $\EE I_{x,y}^2\le CR^2$.
Hence the second-moment inequality gives $\PP(I_{x,y}>0)\ge c$, and \eqref{eq:LPS} gives $\inf_{x,y\in B_R}\PP_\mu(y\in C_\mu(x))
\ge a_0>0$, where $a_0$ is independent of $R$.

Write \(x\longleftrightarrow_D y\) if \(x\) and \(y\) are connected in \(\mathfrak F_m\) and their unique path in \(\mathfrak F_m\) is contained in \(D\).
Doob's inequality applied up to the exponential lifetime $\zeta_X$ gives
\[
 \sup_{x\in B_R}\PP_x\left(\sup_{t<\zeta_X}|X_t|\ge KR\right)
 \le \frac{C}{(K-1)^2}.
\]
If both $X[0,\zeta_X)$ and $Y[0,\zeta_Y)$
stay in $B_{KR}$, and $Y$ hits
$\LE(X[0,\zeta_X))$ before $\zeta_Y$,
then Wilson's algorithm gives
$x\longleftrightarrow_{B_{KR}}y$.
Choose $K>1$ sufficiently large such that
$2C/(K-1)^2\le a_0/2$.
Then with $a=a_0/2$, we obtain
\begin{equation}\label{eq:local-hyp}
 \inf_{x,y\in B_R}
 \PP_\mu(x\longleftrightarrow_{B_{KR}}y)\ge a.
\end{equation}

\emph{Removing the killing.}
For $0<m\le\mu$, the massive spanning forests
can be coupled so that $\cF_\mu\subseteq\cF_m$.
To see this, orient the edges arbitrarily and define
$\nabla:\ell^2(E)\to\ell^2(V)$ by $\nabla\delta_e=\sqrt{c_e}(\delta_{e^+}-\delta_{e^-})$.
By the transfer-current theorem
\cite[Section~4.2]{LP}, the edge set of $\cF_m$
is determinantal with kernel $ \mathcal K_m=\nabla^*(\nabla\nabla^*+mI)^{-1}\nabla$.
The same formula holds in infinite volume by exhaustion; see e.g.~\cite[Section~10.1]{LP}. Now monotonicity of the resolvent gives
$0\preceq \mathcal{K}_\mu\preceq \mathcal{K}_m\preceq I$, and the determinantal domination theorem (see~\cite[Theorem~2.9]{Lyons}) therefore couples the spatial edge sets with $\cF_\mu\subseteq\cF_m$. Consequently, \eqref{eq:local-hyp} holds for every $0<m\le\mu$ as the event $\{x\longleftrightarrow_{B_{KR}}y\}$ there is increasing.

On this event, $\LCA(x,y)$ lies on their connecting path, hence in $B_{KR}$.
Put
\[
 h_m(u)=\PP_m(u\in\gamma_0^m),\qquad
 A_m(t)=\sum_{u\in B_t}h_m(u).
\]
We apply \eqref{eq:ancestor} with $f=\one_{B_R}$ and $B=\{v\}$ to each $v\in B_{KR}$, and then sum over \(v\).
Translation invariance then gives
\begin{align}
 a|B_R|^2
 &\le\sum_{v\in B_{KR}}\sum_{x,y\in B_R}
              \PP_m(\LCA(x,y)=v)\notag\\
 &\le\sum_{v\in B_{KR}}
       \left(\sum_{x\in B_R}h_m(v-x)\right)^2
 \le |B_{KR}|A_m((K+1)R)^2.\label{eq:local-bound}
\end{align}
Thus $A_m((K+1)R)\ge cR^{3/2}$, uniformly for $0<m\le R^{-2}$.
By transience, for every fixed \(u\), the indicator \(\mathbf1_{\{u\in\LE(S[0,t])\}}\) converges to  \(\mathbf1_{\{u\in\eta\}}\) as \(t\to\infty\).
Consequently, $h_m(u)\to\PP(u\in\eta)$ as $m\downarrow0$.
Taking $m\downarrow0$ in the finite sum $A_m((K+1)R)$ and rescaling $R$ by the fixed factor $K+1$ proves \eqref{eq:occupation-main} after adjusting the constant \(c\).

\emph{The growth exponent.}
For independent simple random walks $S^1,S^2$ started at $0$, define
\begin{equation}\label{eq:Es}
 \Es(r)=\PP\bigl(\LE(S^1[0,\tau_r^1])
                    \cap S^2[1,\tau_r^2]=\varnothing\bigr),
\end{equation}
where $\tau_r^i$ is the first exit time of $S^i$ from $B_r$.
By~\cite[Corollary~1.3]{LS}, we have
\begin{equation}\label{eq:Es-beta}
 \Es(r)\asymp r^{\beta-2},\qquad r\ge1.
\end{equation}
The finite-volume one-point estimate in~\cite[Proposition~8.2]{ShiG}, combined with transience and the Green-function bound $\sum_{j\ge0}\PP_0(S_j=z)\le C/(1+|z|)$~\cite[Theorem~4.3.1]{LL}, implies
\begin{equation}\label{eq:one-point-infinite}
 \PP(z\in\eta)\le\frac{C}{1+|z|}\Es(|z|),
 \qquad z\ne0.
\end{equation}
Summing over lattice thus yields $\EE|\eta\cap B_R|\le CR^\beta$.
Combined with \eqref{eq:occupation-main} gives $\beta\ge3/2$.
\end{proof}

\section{Further applications}\label{sec:further}

\subsection{Intersection of independent infinite LERWs}
\label{subsec:independent-lerw}

In~\cite[Question~6.3]{LPS}, the authors asked whether
loop-erasures of two independent copies of a transient Markov chain preserve a.s.~infinite intersection. Theorem~\ref{thm:3d} gives the three-dimensional case.

\begin{theorem}\label{thm:independent-lerw}
Let $\eta_a$ and $\eta_b$ be two independent infinite LERWs on $\ZZ^3$, started at arbitrary $a,b\in\ZZ^3$. Then
\[
 \PP\bigl(|\eta_a\cap\eta_b|=\infty\bigr)=1.
\]
\end{theorem}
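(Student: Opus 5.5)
The plan is to prove that the intersection $\eta_a\cap\eta_b$ has infinite expected size with a second moment under control, and then upgrade to almost sure infiniteness by a zero--one argument. For the first moment, use independence and the one-point estimates. Theorem~\ref{thm:3d} gives $\EE|\eta\cap B_R|\ge cR^{3/2}$. Together with the upper bound \eqref{eq:one-point-infinite}, $\PP(z\in\eta)\le C|z|^{-1}\Es(|z|)\asymp |z|^{\beta-3}$, this tells us that $\PP(z\in\eta_a)$ is, at least on average over dyadic shells, of order $|z|^{\beta-3}$. Here $\beta\ge3/2$, and the comparison holds up to constants by standard quasi-multiplicativity and up-to-constants estimates for $\Es$ \cite{LS,ShiG}.

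Next consider annuli $A_k=B_{2^{k+1}}\setminus B_{2^k}$, and let $Z_k=|\eta_a\cap\eta_b\cap A_k|$. Then
\[
\EE Z_k=\sum_{z\in A_k}\PP(z\in\eta_a)\PP(z\in\eta_b)\gtrsim 2^{3k}\,2^{2k(\beta-3)}=2^{k(2\beta-3)},
\]
which is bounded below by a constant because $\beta\ge3/2$. For the second moment, use the two-point bound for LERW, $\PP(z,w\in\eta)\le C\,\PP(z\in\eta)\,|z-w|^{\beta-3}$ (up to symmetrization), which follows from the Green function bound and the escape-probability estimates as in \cite{ShiG}. This gives
\[
\EE Z_k^2\lesssim \sum_{z,w\in A_k}|z|^{2(\beta-3)}|z-w|^{2(\beta-3)}\lesssim 2^{k(2\beta-3)}\sum_{|u|\le 2^{k+2}}|u|^{2\beta-6}.
\]
When $\beta>3/2$ the inner sum is $\asymp 2^{k(2\beta-3)}$, so $\EE Z_k^2\lesssim(\EE Z_k)^2$. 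Paley--Zygmund then yields $\PP(Z_k>0)\ge c$ uniformly in $k$, and the bound is uniform in the starting points $a,b$ once $k$ is large.

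The case $\beta=3/2$ is the main obstacle, since the inner sum is then logarithmic. To handle it, I would work instead with $Y_n=\sum_{k\le n}Z_k$ weighted by $|z|^{-(2\beta-3)}$. With that weighting, $\EE Y_n\asymp n$ and $\EE Y_n^2\lesssim n^2$, and Paley--Zygmund again gives $\PP(\eta_a\cap\eta_b\setminus B_{r}\ne\varnothing)\ge c$ for every $r$. Either way, we obtain
\[
\inf_{r}\PP\bigl(\eta_a\cap\eta_b\cap B_r^c\ne\varnothing\bigr)\ge c>0.
\]

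Finally, upgrade this to probability one. The event $\{|\eta_a\cap\eta_b|=\infty\}$ is a tail event for the pair of underlying random walks, because by transience the loop-erasure restricted to far-away regions depends, up to a finite initial piece, only on the walk after its last visit to a ball. The cleanest route is to use the Markov/domain Markov property of infinite LERW: condition on $\eta_a,\eta_b$ up to their exit from $B_r$; the remaining paths are LERWs conditioned to avoid the past. The uniform lower bound $c$ should then persist under this conditioning, by the separation lemma for 3D LERW \cite{ShiG}, which ensures that the conditioning costs only a constant with positive probability. Lévy's zero--one law then gives $\PP(|\eta_a\cap\eta_b|=\infty\mid\mathcal F_r)\ge c'$ for all $r$, and letting $r\to\infty$ gives probability one. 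I expect the second-moment two-point bound and this conditioning/separation step to be the technically delicate parts, with the rest being routine.
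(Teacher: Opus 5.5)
Your route to $\PP(|\eta_a\cap\eta_b|=\infty)\ge c_0$ is essentially the paper's second-moment argument. The paper uses small cubes $Q_j=D_{n_j}(a)\cap D_{n_j}(b)$ and truncated LERWs, and normalizes $Y_j=n_j^{-\kappa}Z_j$ with $\kappa=2\beta-3\ge0$, which handles $\beta=3/2$ and $\beta>3/2$ together. One caveat: Theorem~\ref{thm:3d} enters only through $\kappa\ge0$. The pointwise lower bound $\PP(z\in\eta_x)\ge c|z-x|^{\beta-3}$, which you need in $\sum_z\PP(z\in\eta_a)\PP(z\in\eta_b)$, does not follow from Theorem~\ref{thm:3d} and \eqref{eq:one-point-infinite} ``on average over shells''. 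It has to be imported as a genuine one-point estimate, as the paper does via \cite[Proposition~3.16]{ACHS}.

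The genuine gap is the upgrade to probability one.

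First, transience alone does not show that $A=\{|\eta_a\cap\eta_b|=\infty\}$ is insensitive to the initial segments of the walks. $\LE(S[0,\infty))$ is the loop-erasure of $\LE(S[0,n])\oplus S[n,\infty)$, and the later walk can hit the earlier loop-erasure, including outside any fixed ball. So there is no a priori reason that $\LE(S[0,\infty))$ and $\LE(S[n,\infty))$ eventually coincide. What gives this is that 3D simple random walk has infinitely many cut times a.s.\ \cite[Theorem~1.2]{LawlerCut}: at a cut time $t>n$ both loop-erasures pass through $S_t$ and agree afterwards.

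Second, your ``cleanest route'' conditions on the LERW pasts. This needs a lower bound on $\PP(A\mid\eta_a[0,\sigma_r^a],\eta_b[0,\sigma_r^b])$ that is uniform over all possible pasts. Under that conditioning, the futures are loop-erasures of walks conditioned to avoid arbitrary, possibly very bad, sets. The separation lemma in \cite{ShiG} concerns an LERW and an independent walk conditioned not to intersect up to a given scale. It does not say that such conditioned futures are, far away, comparable to unconditioned infinite LERWs, and your sketch supplies no such comparison. As written, this step is a heuristic.

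The fix avoids LERW conditioning altogether: condition on the walks, $\mathcal F_n=\sigma(S^a_j,S^b_j:j\le n)$. By the cut-time observation, $A$ agrees a.s.\ with the same event for the loop-erasures of $S^a[n,\infty)$ and $S^b[n,\infty)$. The simple Markov property then gives $\PP(A\mid\mathcal F_n)=\PP(|\eta_{S^a_n}\cap\eta_{S^b_n}|=\infty)\ge c_0$. This uses the uniformity of $c_0$ in the starting points, which you already noted; it is why the paper takes $n_0\ge10^5(1+|a|+|b|)$ and $K\ge1+\log n_0$. L\'evy's zero--one law then gives $\one_A\ge c_0$ a.s., hence $\PP(A)=1$. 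This sidesteps both the separation lemma and any tail-triviality statement for the pair of walks.
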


\begin{proof}
Put $q=3-\beta$ and $\kappa=3-2q=2\beta-3\ge0$.
By \eqref{eq:Es-beta} and \eqref{eq:one-point-infinite}, $\PP(z\in\eta_x)\le C(1+|z-x|)^{-q}$.
For $n\ge1$, let
\[
 D_n(x)=\{z\in\ZZ^3:
 |z-x-(n/2)e_1|_\infty<n/1000\},
 \qquad e_1=(1,0,0),
\]
and let $\sigma_n^x$ be the first exit time of $\eta_x$
from $B(x,n):=x+B_n$.

Choose $n_0\ge10^5(1+|a|+|b|)$ and set $n_j=2^jn_0$.
The sets $Q_j:=D_{n_j}(a)\cap D_{n_j}(b)$ are pairwise disjoint, satisfy $|Q_j|\asymp n_j^3$,
and lie at distance comparable to $n_j$ from both
$a$ and $b$. Define
\[
 Z_j=\bigl|Q_j\cap\eta_a[0,\sigma_{n_j}^a]
                    \cap\eta_b[0,\sigma_{n_j}^b]\bigr|.
\]
By \cite[Proposition~3.16]{ACHS}, applied with
$K=\varnothing$, for $z\ne w$ in $D_n(x)$,
\[
 \PP(z\in\eta_x[0,\sigma_n^x])\ge cn^{-q},
 \qquad
 \PP(z,w\in\eta_x[0,\sigma_n^x])
 \le Cn^{-q}|z-w|^{-q}.
\]
Independence therefore gives $\EE Z_j\asymp n_j^\kappa$ and $\EE Z_j^2
 \le Cn_j^\kappa
 \left(1+\sum_{r=1}^{n_j}r^{2-2q}\right)
 \le Cn_j^{2\kappa}(1+\log n_j)$,
where the last inequality uses $2-2q=\kappa-1$
and $\kappa\ge0$.

For $i<j$, $z\in Q_i$, $w\in Q_j$, and
$x\in\{a,b\}$, we have
\[
 |z-x|\asymp n_i,\qquad
 |w-x|\asymp|z-w|\asymp n_j.
\]
Applying \cite[Proposition~8.2, case~(I), $k=2$]{ShiG}
in $B(x,N)$ and letting $N\to\infty$ yield
\[
 \PP(z,w\in\eta_x)
 \le C\bigl(n_i^{-q}n_j^{-q}+n_j^{-2q}\bigr)
 \le Cn_i^{-q}n_j^{-q}.
\]
Consequently, $\EE(Z_iZ_j)\le Cn_i^\kappa n_j^\kappa$.

Set $Y_j=n_j^{-\kappa}Z_j$ and $W_K=\sum_{j=1}^K Y_j$.
The preceding bounds imply
\[
 \EE Y_j\asymp1,\qquad
 \EE Y_j^2\le C(1+\log n_j),\qquad
 \EE(Y_iY_j)\le C\quad(i<j).
\]
Since $n_j=2^jn_0$, we have
\[
 \EE W_K\ge cK,\qquad
 \EE W_K^2\le C(K^2+K\log n_0).
\]
Thus, for $K\ge1+\log n_0$, the second-moment
inequality gives $\PP(W_K\ge cK)\ge c_0>0$,
with constants independent of $a,b$.
Since $\kappa\ge0$ and the sets $Q_j$ are disjoint, we have $W_K\le\sum_{j=1}^K Z_j
 \le|\eta_a\cap\eta_b|$.
Letting $K\to\infty$ yields
\begin{equation}\label{eq:infinite-overlap-positive}
 \PP\bigl(|\eta_a\cap\eta_b|=\infty\bigr)\ge c_0.
\end{equation}

Finally, note that three-dimensional simple random walk has
infinitely many cut times a.s.~\cite[Theorem~1.2]{LawlerCut}.
Consequently, removing any finite initial segment
of an infinite random walk changes its loop-erasure by only
finitely many vertices.
Let $A=\{|\eta_a\cap\eta_b|=\infty\}$, and let
$\mathcal F_n=\sigma(S_j^a,S_j^b:0\le j\le n)$,
where $S^a,S^b$ are the underlying independent random walks.
The Markov property and \eqref{eq:infinite-overlap-positive} imply $\PP(A\mid\mathcal F_n)
 =\PP\bigl(
   |\eta_{S_n^a}\cap\eta_{S_n^b}|=\infty
   \bigr)
 \ge c_0$.
By martingale convergence,
$\PP(A\mid\mathcal F_n)\to\one_A$ a.s.
Therefore, $\PP(A)=1$.
\end{proof}

\subsection{Branch length and component size in the massive spanning forest}\label{sec:killed}

Let $G=(V,E)$ be a finite or countably infinite locally
finite graph with a transitive conductance-preserving
automorphism group $\Gamma$ satisfying
\begin{equation}\label{eq:MT}
 \sum_y F(o,y)=\sum_x F(x,o)
\end{equation}
for every nonnegative diagonally $\Gamma$-invariant
function $F$ on $V\times V$.
Note that \eqref{eq:MT} holds for finite transitive graphs and for $\ZZ^d$
by translation invariance.

Assume $\sum_y c_{xy}=1$ for every $x\in V$, and let
$\cF_m$ be the massive spanning forest defined above,
using exhaustion in the infinite case.
Fix $o\in V$ and set
\[
 \chi(m)=\EE|C_m(o)|,\qquad L(m)=\EE|\gamma_o^m|.
\]
With $g_m$ and $\cB(m)$ as in \eqref{eq:massive-green},
we obtain the following bounds.

\begin{proposition}\label{thm:return}
For every $m>0$, we have
\begin{equation}\label{eq:return-bound}
 \frac{1}{2^{10}m^2\cB(m)}
 \le\chi(m)\le L(m)^2,
 \qquad
 \frac{1}{32m\sqrt{\cB(m)}}\le L(m)\le1+m^{-1}.
\end{equation}
\end{proposition}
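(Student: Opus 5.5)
The plan is to prove the four bounds in the order: upper bound on $L$, then $\chi\le L^2$ from Theorem~\ref{thm:separation}, then the lower bound on $\chi$, and finally the lower bound on $L$ as a corollary of the other two.

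\emph{Upper bound on $L$.} By Wilson's algorithm, $\gamma_o^m$ is the loop-erasure of the unit-rate continuous-time walk from $o$ killed at an independent rate-$m$ exponential time $\zeta$. Every vertex of the loop-erasure is visited by the walk, and the number of distinct visited vertices is at most $1$ plus the number of jumps before $\zeta$. That number of jumps has mean $m^{-1}$, so $L(m)\le1+m^{-1}$.

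\emph{The bound $\chi\le L^2$.} Apply \eqref{eq:ancestor} with root $\partial$, $B=\{v\}$ and weights $f$, then sum over $v$. For a finite transitive graph take $f=\one_V/n$. The left side sums to $n^{-1}\sum_x\PP_m(x\in C_m(U))=\chi(m)/n$, since $\LCA(U,W)\in V$ exactly when $U,W$ lie in the same component. On the right, \eqref{eq:MT} applied to $F(x,v)=\PP_m(v\in\gamma^m_x)$ gives $\sum_x\PP_m(v\in\gamma_x^m)=L(m)$. The right side is therefore $\sum_v(L/n)^2=L^2/n$, which yields $\chi\le L^2$.

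In the infinite case, the pointwise bound \eqref{eq:pointwise} together with \eqref{eq:MT} immediately gives $\chi\le 2L^2$. To get the sharp constant I would apply \eqref{eq:ancestor} to $f=\one_A$ for finite $A$ and use \eqref{eq:MT} to handle boundary terms. Alternatively, I would use the finite-volume approximations from Section~\ref{sec:3d}, in which branches and ancestors converge. Making this work without amenability is the step I expect to be delicate, and it is where I would look first for a mass-transport argument applied directly to the quadratic form.

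\emph{Lower bound on $\chi$.} By \eqref{eq:LPS}, $\chi(m)\ge2^{-8}\sum_y\PP(I_{o,y}>0)$. By \eqref{eq:I-first} and \eqref{eq:bubble-identity},
\[
 \sum_y\EE I_{o,y}=\sum_u g(o,u)\sum_y g(y,u)=m^{-2}.
\]
It therefore suffices to show $\EE I_{o,y}\le 4\cB(m)\,\PP(I_{o,y}>0)$ for each $y$. This is the main obstacle. I would prove it by conditioning on the first meeting: let $s^*$ be the first time $X$ visits $Y[0,\zeta_Y)$, and let $t^*$ be the first time $Y$ is at $X_{s^*}$. On $\{I>0\}$, the remaining occupation $\int\!\!\int_{s\ge s^*}\one_{\{X_s=Y_t\}}$ splits into a part with $t\ge t^*$ and a part with $t<t^*$. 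By the strong Markov property and memorylessness of $\zeta_X$, each part should have conditional mean at most $2\cB(m)$: a sum of the type $\sum_v g(u,v)^2=\cB(m)$, or a time-reversed analogue of it using reversibility of $p_t$. Summing over $y$ then gives $\chi\ge 2^{-8}\cdot m^{-2}/(4\cB)=1/(2^{10}m^2\cB(m))$.

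\emph{Lower bound on $L$.} Combining the two bounds on $\chi$ gives $L\ge\sqrt{\chi}\ge 1/(32m\sqrt{\cB(m)})$.
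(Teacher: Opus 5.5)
There are two genuine gaps, one in each of the two nontrivial bounds. The upper bound on $L$, your finite-volume derivation of $\chi\le L^2$, and the final deduction $L\ge\sqrt{\chi}$ are fine and match the paper.

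\textbf{Lower bound on $\chi$.} Your first-meeting argument does not establish the per-$y$ inequality $\EE I_{o,y}\le 4\cB(m)\PP(I_{o,y}>0)$, and that inequality is stronger than what is needed. Conditioning on $Y$, the strong Markov property and memorylessness of $\zeta_X$ at $s^*$ are fine. They give $\EE[I\mid Y,X[0,s^*]]=\one_{\{s^*<\zeta_X\}}\int_0^{\zeta_Y}g(u,Y_t)\dd t$ with $u=X_{s^*}$.

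The problem is $t^*$. It is not a stopping time for $Y$ in any useful sense. The point $u$ is where $X$ first hits the whole range $Y[0,\zeta_Y)$, so $u$ depends on $Y[t^*,\zeta_Y)$: the event $\{X_{s^*}=u\}$ forces $X[0,s^*)$ to avoid that future piece. Given $u$ and $\{I>0\}$, the path $Y[t^*,\zeta_Y)$ is therefore not a fresh killed walk from $u$, and the bound by $\sum_v g(u,v)^2=\cB(m)$ does not follow. The ``time-reversed analogue'' for $t<t^*$ has the same defect, and the constant $2$ for each part is not derived.

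The paper avoids any per-$y$ statement. Summing \eqref{eq:I-first} and \eqref{eq:I-second} over $y$, and using $\sum_y g(y,u)=m^{-1}$ (symmetry of $g$ and \eqref{eq:bubble-identity}) and $\sum_v g(u,v)^2=\cB(m)$ (transitivity), gives exactly $\sum_y\EE I_{o,y}=m^{-2}$ and $\sum_y\EE I_{o,y}^2=4m^{-2}\cB(m)$. Then $\EE I_{o,y}\le(\EE I_{o,y}^2)^{1/2}\PP(I_{o,y}>0)^{1/2}$, followed by Cauchy--Schwarz over $y$ (on finite sets, then by exhaustion), yields $\sum_y\PP(I_{o,y}>0)\ge 1/(4m^2\cB(m))$. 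Then \eqref{eq:LPS} finishes.

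\textbf{$\chi\le L^2$ in infinite volume.} You leave this open and obtain only $\chi\le 2L^2$, which would also cost a factor $\sqrt2$ in the lower bound on $L$. The difficulty comes only from summing over $v$; do not sum. Fix $v$ and apply \eqref{eq:ancestor} with $B=\{v\}$ and $f=\one_F$ for finite $F$; this is valid in infinite volume by Section~\ref{sec:3d}. Letting $F\uparrow V$, monotone convergence gives
\[
 A(v):=\sum_{x,y}\PP_m(\LCA(x,y)=v)\le\Bigl(\sum_x\PP_m(v\in\gamma_x^m)\Bigr)^2 .
\]
Now take $v=o$. Applying \eqref{eq:MT} with $F(x,v)=\PP_m(v\in\gamma_x^m)$ turns the bracket into $L(m)$. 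Applying it with $F(x,v)=\sum_y\PP_m(\LCA(x,y)=v)$ turns $A(o)$ into $\sum_{v,y}\PP_m(\LCA(o,y)=v)=\chi(m)$. No boundary terms arise and amenability plays no role; this is exactly the paper's argument.
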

\begin{proof}
We first prove $\chi(m)\le L(m)^2$.
For each $v\in V$ and each finite set $F\subseteq V$, apply
\eqref{eq:ancestor} with $B=\{v\}$ and $f=\one_F$.
Letting $F\uparrow V$ and using monotone convergence gives
\[
 A(v):=\sum_{x,y\in V}\PP_m(\LCA(x,y)=v)
 \le
 \left(\sum_{x\in V}\PP_m(v\in\gamma_x^m)\right)^2.
\]
Applying \eqref{eq:MT} to $F(x,v)=\PP_m(v\in\gamma_x^m)$ and to
$F(x,v)=\sum_y\PP_m(\LCA(x,y)=v)$ gives
\[
 \sum_x\PP_m(o\in\gamma_x^m)=L(m),\qquad
 A(o)=\sum_{v,y}\PP_m(\LCA(o,y)=v)=\chi(m).
\]
By Tonelli's theorem, both identities hold in \([0,\infty]\).
It follows that $\chi(m)\le L(m)^2$.
The expected number of jumps before killing is $m^{-1}$, so $L(m)\le1+m^{-1}$.

For the lower bound, write $I_y=I_{o,y}$ in \eqref{eq:I-occupation}.
Summing \eqref{eq:I-first}--\eqref{eq:I-second} over $y$, using \eqref{eq:bubble-identity} and transitivity implies $\sum_y\EE I_y=m^{-2}$ and $\sum_y\EE I_y^2=4m^{-2}\cB(m)$.
Cauchy--Schwarz inequality, first on finite sets and then by exhaustion, gives
\[
 \sum_y\PP(I_y>0)
 \ge
 \frac{(\sum_y\EE I_y)^2}{\sum_y\EE I_y^2}
 =\frac{1}{4m^2\cB(m)}.
\]
Summing \eqref{eq:LPS} yields the lower bound on $\chi(m)$.
The lower bound on $L(m)$ follows from $\chi(m)\le L(m)^2$.
\end{proof}

Proposition~\ref{thm:return} has the following consequence for the long-range LERW on $\ZZ^3$.
\begin{proposition}\label{prop:long-range}
For $3/2<\alpha<2$, let $S$ be a symmetric random walk on $\ZZ^3$
started at $0$, with jump distribution $q(z)\asymp |z|^{-3-\alpha}$. Let $\tau_R=\inf\{n\ge0:S_n\notin B_R\}$.
Then
\begin{equation}\label{eq:long-range-growth}
 \beta_\alpha:=\liminf_{R\to\infty}
 \frac{\log\EE|\LE(S[0,\tau_R])|}{\log R}\in\left[\frac32,\alpha\right].
\end{equation}
In particular, $\beta_\alpha\to3/2$ as $\alpha\downarrow3/2$. The same results hold with $\limsup$ in place of $\liminf$.
\end{proposition}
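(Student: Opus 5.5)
The plan is to sandwich $\EE|\LE(S[0,\tau_R])|$ between $R^{3/2}$ and $R^{\alpha}$. The lower bound will come from Proposition~\ref{thm:return} at killing rate $m\asymp R^{-\alpha}$, following the template of Section~\ref{sec:3d}. The upper bound will come from the trivial comparison with the length of the walk itself.

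\emph{Upper bound.} I bound the loop-erasure by the walk: $|\LE(S[0,\tau_R])|\le\tau_R+1$. For an $\alpha$-stable-like walk, $\EE\tau_R\le CR^\alpha$. This follows, for instance, because each step has probability at least $cR^{-\alpha}$ of being a single jump of size $\ge 2R$, which exits $B_R$; hence $\tau_R$ is dominated by a geometric variable with mean $CR^\alpha$. This gives $\beta_\alpha\le\alpha$ for both $\liminf$ and $\limsup$.

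\emph{Lower bound.} I would work in continuous time with unit jump rate and $c_{xy}=q(y-x)$, so that $p_t(0,0)\asymp\min(1,t^{-3/\alpha})$ by standard stable-like heat kernel bounds. Since $\alpha<2<3$, formula \eqref{eq:bubble-identity} gives $\cB(m)=\int_0^\infty te^{-mt}p_t(o,o)\dd t\asymp m^{3/\alpha-2}$; this integral diverges at infinity when $m=0$ precisely because $\alpha>3/2$. Proposition~\ref{thm:return} then yields
\[
 L(m)\ge \frac{c}{m\sqrt{\cB(m)}}\asymp m^{-1+1-3/(2\alpha)}=m^{-3/(2\alpha)}.
\]
Set $m=R^{-\alpha}$, so that $L(m)\ge cR^{3/2}$.

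It remains to transfer this to the stopped loop-erasure $\LE(S[0,\tau_{KR}])$, and this is the main obstacle. The quantity $L(m)=\EE|\LE(X[0,\zeta))|$ counts points of the loop-erasure of a walk killed at time $\zeta\sim\mathrm{Exp}(m)$, and these points may lie outside $B_{KR}$. Two facts are needed.
\begin{enumerate}
\item With probability bounded below, $X[0,\zeta)$ stays in $B_{KR}$ for large $K$, because $\EE\sup_{t<\zeta}|X_t|^{\alpha'}$ is controlled for $\alpha'<\alpha$.
\item The contributions to $L(m)$ must be localized, i.e.\ shown to come mostly from within $B_{KR}$.
\end{enumerate}
A cleaner route to (2) is to bypass the mismatch and rerun the Section~\ref{sec:3d} argument directly. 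One proves the local connectivity estimate \eqref{eq:local-hyp} at $\mu=R^{-\alpha}$ via \eqref{eq:I-first}, \eqref{eq:I-second}, \eqref{eq:LPS}, using $g_\mu(x,u)\ge cR^{\alpha-3}$ on $B_R$ and $\cB(\mu)\le CR^{2\alpha-3}$. The point is that the second moment bound $4\cB^2$ matches $(\EE I)^2\asymp R^{2(2\alpha-3)}$, so $\PP(I>0)\ge c$. Then \eqref{eq:local-bound} gives $\sum_{u\in B_{(K+1)R}}h_\mu(u)\ge cR^{3/2}$.

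Finally, $h_\mu$ on a finite ball must be related to $\PP(u\in\LE(S[0,\tau_{K'R}]))$. I would couple the killed walk with the walk stopped at $\tau_{K'R}$ and use the fact that loop-erasure points inside $B_{(K+1)R}$ change only if the walk returns after exiting $B_{K'R}$. For large $K'$ this event has small probability, by transience and the jump-tail estimate. This step is the delicate one, and the argument I would try first follows \cite{ShiG}-type comparison lemmas. Once it is in place, $\EE|\LE(S[0,\tau_{R}])|\ge cR^{3/2}$ for all $R$, giving $\beta_\alpha\ge 3/2$ for both $\liminf$ and $\limsup$. The limit $\beta_\alpha\to3/2$ as $\alpha\downarrow3/2$ is then immediate from the sandwich $3/2\le\beta_\alpha\le\alpha$.
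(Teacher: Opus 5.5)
Your upper bound is the paper's. Your lower bound $L(R^{-\alpha})\ge cR^{3/2}$ is fine, as is the localized version obtained by rerunning Section~\ref{sec:3d} at $\mu=R^{-\alpha}$.

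The genuine gap is the last step, which you flag but do not supply: passing from the walk killed at an independent $\mathrm{Exp}(R^{-\alpha})$ time to the walk stopped at $\tau_{K'R}$. The mechanism you describe handles the wrong case. The moment bound behind your fact (1) gives $\PP(\sup_{t<\zeta}|X_t|\ge K'R)\le CK'^{-\alpha'}$, so $\zeta<\tau_{K'R}$ with probability tending to $1$. Thus $X[0,\zeta)$ is typically a proper \emph{prefix} of $S[0,\tau_{K'R}]$, not an extension of it. The continuation $S[\zeta,\tau_{K'R}]$ still spends time of order $R^\alpha$ in $B_{(K+1)R}$. Nothing prevents it from hitting $\LE(X[0,\zeta))$ far from its tip, which erases everything after the hit point. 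Loop-erasure is not monotone under extending the path, so no coupling of this kind compares $|\LE(S[0,\tau_{K'R}])\cap B_{(K+1)R}|$ with $|\LE(X[0,\zeta))\cap B_{(K+1)R}|$. To do so you would have to control how much the continuation erases, which is essentially the growth-exponent question itself. Your remark that points ``change only if the walk returns after exiting $B_{K'R}$'' compares the stopped walk with a \emph{longer} walk, i.e.\ the rare case $\zeta>\tau_{K'R}$. The one-point and escape estimates of \cite{ShiG,LS} that make such comparisons possible in Section~\ref{sec:3d} are specific to simple random walk and are not available here.

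The missing idea is to make the killing a step of the walk itself, so that no erasing steps follow it.
\begin{itemize}
\item Truncate jumps at $r$. Let $X^{(r)}$ have rates $q(z)\one_{\{|z|\le r\}}$ and killing rate $m_r=\sum_{|z|>r}q(z)\asymp r^{-\alpha}$. By Poisson thinning, $X^{(r)}[0,\zeta)$ is exactly $S$ run until its first jump longer than $r$.
\item Independently of that trajectory, this jump exceeds $2R$ with probability $m_{2R}/m_r$. If the trajectory stayed in $B_{R/2}$, the killing jump is then the exit jump, so $\LE(X^{(r)}[0,\zeta))\subseteq\LE(S[0,\tau_R])$.
\item Proposition~\ref{thm:return} applied to $X^{(r)}$ gives $\EE L_r\ge cr^{3/2}$. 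Truncation also makes the graph locally finite, as that proposition assumes; your untruncated application does not literally satisfy this.
\item Bounded jumps allow an exponential supermartingale bound confining the trajectory to $B_{Ar\log r}$ except on an event of probability $Cr^{-cA}$. Cauchy--Schwarz with $\EE L_r^2\le Cr^{2\alpha}$ makes that event negligible.
\item Taking $r\asymp R/\log R$ gives $\EE|\LE(S[0,\tau_R])|\ge cR^{3/2}(\log R)^{-\alpha-3/2}$.
\end{itemize}
The polylogarithmic loss is harmless for the exponent. It does mean that your claimed bound $cR^{3/2}$, without logarithms, is more than this route delivers.
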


\begin{proof}
For $r>0$, let $X^{(r)}$ be the continuous-time random walk started
at $0$ with jump rates
$q_r(z)=q(z)\mathbf1_{\{|z|\le r\}}$, killed at an
independent exponential time $\zeta$ of rate
$m_r=\sum_{|z|>r}q(z)\asymp r^{-\alpha}$.
Write $L_r=|\LE(X^{(r)}[0,\zeta))|$.
Then the corresponding $\cB(m_r)$ is bounded by a constant multiple
of that for $S$.
By Proposition~\ref{thm:return} and the heat-kernel estimates~\cite[Theorem~1.1]{BL},  $\EE L_r$ is thus at least
$c r^{3/2}$ for some $c>0$.

We claim that restricting to random walk trajectories contained in $B_{Ar\log r}$ preserves this lower bound for sufficiently large $A$. Indeed, an exponential supermartingale argument
(cf.~\cite[Proposition~2.5]{BL}) gives
\[
 \PP\left(\sup_{t<\zeta}|X_t^{(r)}|
       \ge Ar\log r\right)\le Cr^{-cA}.
\]
Since the number $N_\zeta$ of jumps before killing is geometric
with parameter $m_r$, we have
$\EE L_r^2\le\EE(N_\zeta+1)^2
\le Cr^{2\alpha}$. Cauchy--Schwarz inequality then
implies that trajectories leaving $B_{Ar\log r}$
contribute $o(r^{3/2})$ to $\EE L_r$ for sufficiently
large $A$.

We view $S$ as a continuous-time random walk with unit jump rate.
Its jumps longer than $r$ occur at rate $m_r$,
independently of the shorter jumps, so its trajectory
before the first such jump has the law of
$X^{(r)}[0,\zeta)$.
This jump exceeds $2R$ with probability $m_{2R}/m_r$.
If the preceding trajectory stays in $B_{R/2}$,
the jump exits $B_R$ without erasing its loop-erasure. Hence, by taking $r=\lfloor R/(4A\log R)\rfloor$,
we obtain
\[
 \EE|\LE(S[0,\tau_R])|
 \ge \frac{m_{2R}}{m_r}
 \EE\left[L_r;\sup_{t<\zeta}|X_t^{(r)}|<R/2\right]
 \ge \frac{cR^{3/2}}{(\log R)^{\alpha+3/2}},
\]
where the last inequality follows from the previous paragraph. Finally, since $\EE|\LE(S[0,\tau_R])|\le1+\EE\tau_R\le CR^\alpha$, we conclude.
\end{proof}

We refer the reader to Xiao--Pan--Fan--Deng~\cite{XPFD}
for numerical estimates of $\beta_\alpha$.

\begin{remark}
The inequality \eqref{eq:pointwise} also extends to the wired uniform spanning forest (WUSF) of any transient locally finite graph. Indeed, applying \eqref{eq:pointwise} to the wired spanning trees via exhaustion and using Wilson's algorithm rooted at infinity, we obtain
\[
 \PP(x\leftrightarrow y\text{ in the WUSF})
 \le 2\sum_{v\in V}
       \PP(v\in\eta_x)\PP(v\in\eta_y),
\]
where $\eta_x$ and $\eta_y$ denote independent infinite LERWs started at $x$ and $y$.
\end{remark}

\smallskip
\paragraph{\textbf{Acknowledgments}}
The author thanks Tom Hutchcroft, Xinyi Li, and Runsheng Liu for helpful conversations. This work was supported by the National Key R\&D Program of China (No.~2021YFA1002700) and Beijing Natural Science Foundation (JQ26001).

\end{document}